\documentclass[11pt,a4paper]{article}
\usepackage{pdflscape}
\usepackage{indentfirst,mathrsfs}
\usepackage{amsfonts,amsmath,amssymb,amsthm}
\usepackage{latexsym,amscd}
\usepackage{amsbsy}
\usepackage{color}
\usepackage{CJK}

\newtheorem{theorem}{Theorem}

\begin{document}
 \title{On the maximum order complexity of the  Thue-Morse and Rudin-Shapiro sequence}

 \author{Zhimin Sun$^1$ and Arne Winterhof$^2$\\ $^1$   Faculty of Mathematics and Statistics, Hubei Key Laboratory of Applied \\ Mathematics, Hubei
University, Wuhan, 430062,
 China\\
 $^2$ Johann Radon Institute for Computational and Applied
Mathematics,\\
Altenberger Stra{\ss}e 69, A-4040 Linz, Austria\\
e-mail: arne.winterhof@oeaww.ac.at}

\maketitle

\begin{abstract}
 Expansion complexity and maximum order complexity are both finer measures of pseudorandomness than the linear complexity which is the most prominent quality measure for cryptographic sequences.
 The expected value of the $N$th maximum order complexity is of order of magnitude $\log N$  whereas it is easy to find families of sequences with
 $N$th expansion complexity exponential in $\log N$.
 This might lead to the conjecture that the maximum order complexity is a finer measure than the expansion complexity. However, in this paper we provide two examples, the Thue-Morse sequence
 and the Rudin-Shapiro sequence with very small expansion complexity but very large maximum order complexity. More precisely, we prove explicit formulas for their $N$th
 maximum order complexity which are both of
  largest possible order of magnitude $N$.  We present the result on the Rudin-Shapiro sequence in a more general form as a formula for the maximum order complexity
  of certain pattern sequences.
\end{abstract}

Keywords. Thue-Morse sequence, Rudin-Shapiro sequence, automatic sequences, maximum order complexity, measures of pseudorandomness

 \section{Introduction}

  \subsection{Motivation}

 For a sequence ${\cal S}=(s_i)_{i=0}^\infty$ over the finite field ${\mathbb F}_2$ of two elements and a positive integer $N$, the {\em $N$th linear complexity} $L({\cal S},N)$
is the length $L$ of a shortest linear recurrence
$$
 s_{i+L}=\sum_{\ell=0}^{L-1} c_\ell s_{i+\ell}, \quad 0\le i\le N-L-1,
$$
with coefficients $c_\ell\in {\mathbb F}_2$, which is satisfied by the first $N$ terms of the sequence.

The ($N$th) linear complexity is a measure for the unpredictability of a sequence and thus its suitability in cryptography. A sequence ${\cal S}$ with small $L({\cal S},N)$ for a sufficiently large $N$
is disastrous for cryptographic applications. However, the converse is not true. There are highly predictable sequences ${\cal S}$ with large $L({\cal S},N)$, including the example
\begin{equation}\label{trivex} s_0=\ldots=s_{N-2}=0\ne s_{N-1}.
\end{equation}
Hence, for testing the suitability of a sequence in cryptography we also have to study finer figures of merit. A recent survey on
linear complexity and related measures is given in~\cite{MW13}.

The \emph{$N$th maximum order complexity} $M({\cal S},N)$ (or {\em $N$th nonlinear complexity}) of a binary sequence ${\cal S}=(s_i)_{i=0}^\infty$
with $(s_0,\ldots,s_{N-2})\ne (a,\ldots,a)$ and $a\in \{0,1\}$ is the smallest positive integer $M$
such that there is a polynomial $f(x_1,\ldots,x_M)\in {\mathbb F}_2[x_1,\ldots,x_M]$
with
$$s_{i+M}=f(s_i,s_{i+1},\ldots,s_{i+M-1}),\quad 0\le i\le N-M-1,$$
see \cite{ja89,ja91,nixi14}. If $s_i=a$ for $i=0,\ldots,N-2$, we define $M({\cal S},N)=0$ if $s_{N-1}=a$ and $M({\cal S},N)=N-1$ if $s_{N-1}\ne a$.

Obviously we have
$$M({\cal S},N)\le L({\cal S},N).$$
We have $M({\cal S},N)=L({\cal S},N)-1$ for the example $(\ref{trivex})$.
However, the expected value of $M({\cal S},N)$ is of order of magnitude $\log N$, see \cite{ja89} and also  \cite{ermu,jabo,nixi14}, and the expected value of $L(N)$ is $N/2+O(1)$ by \cite{gu}.
Hence, the maximum order complexity is a finer measure of pseudorandomness than the linear complexity.

Diem \cite{di12} introduced the expansion complexity of the sequence ${\cal S}$ as follows. We define the {\em generating function} $G(x)$ of ${\cal S}$ by
$$G(x)=\sum_{i=0}^\infty s_i x^i,$$
viewed as a formal power series over ${\mathbb F}_2$. (Note the change by the factor $x$ compared to the definition in~\cite{di12}.)
For a positive integer $N$, the {\em $N$th expansion complexity} $E_N=E_N({\cal S})$ is $E_N=0$ if $s_0=\ldots=s_{N-1}=0$ and otherwise the least total degree
of a nonzero polynomial $h(x,y)\in {\mathbb F}_2[x,y]$ with
$$h(x,G(x))\equiv 0 \bmod x^N.$$
 By \cite[Theorem 3]{meniwi} we have
 $$E({\cal S},N)\le L({\cal S},N)+1$$
  and also in \cite{meniwi} examples of sequences ${\cal S}$ are given with $E({\cal S},N)$ substantially smaller than $L({\cal S},N)$.
 Hence, the expansion complexity is also a finer measure of pseudorandomness than the linear complexity.
 In particular, for (ultimately) non-periodic automatic sequences we have seen in  \cite{mewi} that
 they have bounded expansion complexity but linear complexity of order of magnitude $N$.

 Now it is a natural question to compare the two finer measures of pseudorandomness, expansion complexity and maximum order complexity.
On the one hand, by \cite[Theorem~1]{meniwi} for any $T$-periodic sequence ${\cal S}$ and $N>T(T-1)$ we have $E({\cal S},N)=L({\cal S},N)+1$ which has an expected value of order of magnitude $T$,
see for example \cite{MW13}. On the other hand, the expected value of $M({\cal S},N)$ is of order of magnitude $\log N$.
This might lead to the conjecture that $M({\cal S},N)$ is a finer measure of pseudorandomness than $E({\cal S},N)$.
However, in this paper we will disprove this conjecture by showing that certain pattern sequences which include the Thue-Morse and the Rudin-Shapiro sequence have bounded expansion complexity
but maximum order
complexity of largest possible order of magnitude $N$.
We explain this more precisely in the next subsection.

\subsection{Results of this paper}

  The {\em Thue-Morse sequence} ${\cal T}=(t_i)_{i=0}^\infty$ over ${\mathbb F}_2$ is defined by
\begin{equation}\label{tmdef} t_i=\left\{ \begin{array}{cl} t_{i/2} & \mbox{if $i$ is even},\\ t_{(i-1)/2}+1 & \mbox{if $i$ is odd},
              \end{array}\right.\quad i=1,2,\ldots
\end{equation}
with initial value $t_0=0$. In other words $t_i$ is the parity of the sum of digits of $i$. Taking
$$h(x,y)=(x+1)^3 y^2+(x+1)^2 y+x$$
its generating function $G(x)$ satisfies $h(x,G(x))=0$
and thus
$$E({\cal T},N)\le 5,\quad N=1,2,\ldots$$
Theorem~\ref{thuethm} below gives an explicit formula for $M({\cal T},N)$ of order of magnitude $N$.

 More generally, for a positive integer $k$ we study the {\em pattern sequence} ${\cal P}_{k}=(p_i)_{i=0}^\infty$ over ${\mathbb F}_2$ defined by
\begin{equation}\label{pattdef} p_i=\left\{ \begin{array}{cl}
 p_{\lfloor i/2 \rfloor}+1 & \mbox{if $i\equiv -1 \,\, \bmod \,\, 2^k$},\\
p_{\lfloor i/2 \rfloor} & \mbox{otherwise},
\end{array}\right.\quad i=1,2,\ldots
\end{equation}
with initial value $p_0=0$.
In other words $p_i$ is the parity of the number of occurences of the all one pattern of length $k$ in the binary expansion of $i$.
For $k=1$ we get the Thue-Morse sequence and for $k=2$ the {\em Rudin-Shapiro sequence}.

Taking
$$h(x,y)=(x+1)^{2^{k}+1}y^2+(x+1)^{2^k}y+x^{2^k-1}$$
its generating function $G(x)$ satisfies $h(x,G(x))=0$
and thus
$$E({\cal P}_k,N)\le 2^k+3,\quad N=1,2,\ldots$$
Theorem~\ref{patternthm} below  provides an explicit formula for $M({\cal P}_k,N)$ for $k\ge 2$ of order of magnitude $N$.
Note that the case $k=1$ is slightly different than the case $k\ge 2$.

In Section~\ref{thue} we study the maximum order complexity of the Thue-Morse sequence, that is, ${\cal P}_1$ and
in Section~\ref{rudin} of ${\cal P}_k$ for $k\ge 2$.

 \section{Thue-Morse sequence}
\label{thue}

 \begin{theorem}\label{thuethm}
  For $N\ge 4$,
  the $N$th maximum order complexity of the Thue-Morse sequence ${\cal T}$ satisfies
  $$M({\cal T},N)=2^\ell+1,$$
  where
  $$\ell=\left\lceil \frac{\log (N/5)}{\log 2}\right\rceil.$$
 \end{theorem}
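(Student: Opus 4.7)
My plan is to recast the maximum order complexity combinatorially and then exploit the self-similar structure of $\mathcal{T}$. Observing that a recurrence relation of length $M$ exists if and only if equal length-$M$ windows force equal next bits, we have
$$M(\mathcal{T}, N) = 1 + \max\{\mu(i,j) : 0 \le i < j,\ j + \mu(i,j) \le N-1\},$$
where $\mu(i,j) := \min\{k \ge 0 : t_{i+k} \ne t_{j+k}\}$ is the length of the longest common prefix of the two shifts of $\mathcal{T}$ starting at $i$ and $j$. The theorem is therefore equivalent to showing that this maximum equals $2^\ell$ on the range $5 \cdot 2^{\ell-1} < N \le 5 \cdot 2^\ell$.

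For the lower bound I will exhibit the pair $(i,j) = (0,\, 3 \cdot 2^{\ell-1})$ (and the pair $(1,2)$ in the boundary case $\ell = 0$). Using the identity $t_{2^\ell q + s} \equiv t_q + t_s \pmod 2$ valid for $0 \le s < 2^\ell$, a direct computation on binary representations gives $t_k = t_{3 \cdot 2^{\ell-1} + k}$ for every $k \in [0, 2^\ell)$, while $t_{2^\ell} = 1$ differs from $t_{5 \cdot 2^{\ell-1}} = 0$. This yields $\mu(i,j) = 2^\ell$ and $j + \mu(i,j) = 5 \cdot 2^{\ell-1}$, which respects the constraint as soon as $N > 5 \cdot 2^{\ell-1}$.

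For the upper bound I will argue by induction on $\ell$. The self-similarity $t_{2m} = t_m$, $t_{2m+1} = t_m + 1$ yields, after an explicit case analysis on the parities of $i$ and $j$, three reduction formulas: $\mu(2i_1, 2j_1) = 2 \mu(i_1, j_1)$ in the even-even case; $\mu(2i_1+1, 2j_1+1) = 2\mu(i_1, j_1) - 1$ (provided $\mu(i_1,j_1) \ge 1$) in the odd-odd case; and, when $i$ and $j$ have opposite parities, the uniform bound $\mu(i,j) \le 3$. The mixed-parity bound is the main structural lemma: expanding the defining relations for a hypothetical opposite-parity pair with $\mu(i,j) \ge 4$ forces three consecutive equal symbols to appear in $\mathcal{T}$, contradicting the overlap-freeness of $\mathcal{T}$. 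For $\ell \ge 2$ any pair with $\mu(i,j) \ge 2^\ell + 1 \ge 5$ is therefore of equal parity, and one of the reduction formulas provides $(i_1, j_1)$ with $\mu(i_1, j_1) \ge 2^{\ell-1}+1$ and $j_1 + \mu(i_1, j_1) = (j + \mu(i,j))/2$; the inductive hypothesis then yields $j + \mu(i,j) \ge 5 \cdot 2^\ell$. The base case $\ell = 1$ requires the additional direct check that any mixed-parity pair with $\mu = 3$ satisfies $j + \mu \ge 10$, which follows by locating the first two positions $m$ with $t_m = t_{m+1}$ (namely $m = 1$ and $m = 5$).

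The main obstacle I anticipate is a clean derivation of the mixed-parity bound $\mu(i,j) \le 3$ via overlap-freeness; once the three parity reductions are in place, the induction and the lower-bound construction assemble routinely.
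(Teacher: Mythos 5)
Your proposal follows essentially the same route as the paper: the translation of $M({\cal T},N)$ into the longest repeated factor with distinct successors (Jansen's Proposition 3.1, which the paper cites), the same witness pair $(0,3\cdot 2^{\ell-1})$ with $t_{2^\ell}\ne t_{5\cdot 2^{\ell-1}}$ for the lower bound, the same parity rigidity of short windows, and the same halving induction for the upper bound; your $\mu(i,j)$ bookkeeping and the reduction formulas $\mu(2i_1,2j_1)=2\mu(i_1,j_1)$, $\mu(2i_1+1,2j_1+1)=2\mu(i_1,j_1)-1$ are a clean repackaging of the paper's even/odd case analysis, and the quantitative induction statement ``$\mu\ge 2^\ell+1$ implies $j+\mu\ge 5\cdot 2^\ell$'' does yield the theorem. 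Two repairs are needed. First, in the mixed-parity lemma (the step you flag yourself), a common prefix of length $4$ with $i$ even and $j$ odd does \emph{not} force three consecutive equal symbols: using $t_{2m+1}=t_{2m}+1$ it forces the window $abab$ with $a\ne b$ at both positions, and extending one step at the odd position gives the overlap $ababa$, so the contradiction is with overlap-freeness via $ababa$, not via a cube $aaa$; alternatively, argue as the paper does by comparing the three possible shapes of length-$4$ windows according to $j\bmod 4$, which rules out mixed parity immediately. Second, your base case $\ell=1$ treats only mixed-parity pairs with $\mu=3$; equal-parity pairs with $\mu\ge 3$ must also be handled, either by one more halving down to the (easily checked) $\ell=0$ statement that $\mu\ge 2$ forces $j+\mu\ge 5$, or by simply checking all pairs inside the first ten terms — and the same finite check covers the upper bound for $N=4,5$, which your induction starting at $\ell=1$ does not reach. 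With these small fixes the argument is complete and matches the paper's proof in substance.
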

\begin{proof}
 For $N=4,5,6$ the result is easy to verify.

 By the monotony of the maximum order complexity it is enough to show
 $$M({\cal T},5\cdot 2^{\ell-1}+1)\ge 2^\ell +1\ge M({\cal T},5\cdot 2^\ell)\quad \mbox{for }\ell=1,2,\ldots$$
 From Proposition 3.1 in \cite{ja89}, if $t$ be the length of the longest subsequence of ${\cal T}$ that
occurs at least twice with different successors, then ${\cal T}$  has
 the maximum order complexity $t + 1$. Hence
 the first inequality follows from
 \begin{equation}\label{ti} t_i=t_{i+3\cdot2^{\ell-1}} \quad \mbox{for }i=0,1,\ldots,2^\ell -1 \quad \mbox{and}\quad t_{2^\ell}\ne t_{5\cdot 2^{\ell-1}},\quad \ell=1,2,\ldots
 \end{equation}
 which we show by induction over $\ell$ below. More precisely, if there was a recurrence of length $2^\ell$ for the first $5\cdot 2^{\ell-1}+1$ sequence elements,
 $$t_{i+2^\ell}=f(t_i,\ldots,t_{i+2^{\ell-1}}),\quad 0\le i\le 3\cdot 2^{\ell-1},$$
 then from $(t_0,\ldots,t_{2^\ell-1})=(t_{3\cdot 2^{\ell-1}},\ldots,t_{5\cdot 2^{\ell-1}-1})$ we would get $t_{2^\ell}=t_{5\cdot 2^{\ell-1}}$, a contradiction to $(\ref{ti})$.

 For $\ell=1$ the assertion $(\ref{ti})$ is obviously true and we may assume $\ell\ge 2$.\\
 For even $i$ we get by $(\ref{tmdef})$ and induction
 $$t_i=t_{i/2}=t_{i/2+3\cdot 2^{\ell -2}}=t_{i+3\cdot 2^{\ell-1}},\quad i=0,2,\ldots,2^\ell-2.$$
 For odd $i$ we get
 $$t_i=t_{(i-1)/2}+1=t_{(i-1)/2+3\cdot 2^{\ell-2}}+1=t_{i+3\cdot 2^{\ell-1}},\quad i=1,3,\ldots,2^\ell-1.$$
 Moreover,
 $$t_{2^\ell}=t_{2^{\ell -1}}\ne t_{5\cdot 2^{\ell-2}}=t_{5\cdot 2^{\ell-1}}.$$

Now we prove $M({\cal T},5\cdot 2^\ell)\le 2^\ell+1$ for $\ell=1,2,\ldots$
In other words, we have to show that for any $\ell=1,2,\ldots$,
if for some $0\le j<k\le  2^{\ell+2}-2$ we have
\begin{equation}\label{pre} t_{i+j}=t_{i+k} \quad \mbox{for }i=0,1,\ldots,2^\ell,
\end{equation}
then we also have  $t_{2^\ell+1+j}= t_{2^\ell+1+k}$.
This can be easily verified for $\ell=1$ and we may assume $\ell\ge 2$.

First we note that $(t_j,t_{j+1},t_{j+2},t_{j+3})$ is of the form $(x,x+1,y,y+1)$ if $j$ is even since
$t_{2m+1}=t_{m}+1=t_{2m}+1$ and either of the form $(x,x,x+1,y)$  for $j\equiv 1\,\, \bmod \,\, 4$
or $(x,y,y+1,y+1)$ for $j\equiv 3\,\, \bmod \,\, 4$ since
$t_{4m+1}=t_m+1=t_{4m+2}$ and $t_{4m+3}=t_m=t_{4m}$.
 Hence,  $(t_j,t_{j+1},t_{j+2},t_{j+3})=(t_k,t_{k+1},t_{k+2},t_{k+3})$ implies $j\equiv k\bmod 2$.

If $j$ and $k$ are both even,
then from $(\ref{tmdef})$ and $(\ref{pre})$ with $i=2^\ell$ we get
$$t_{2^\ell+1+j}=t_{2^{\ell-1}+j/2}+1=t_{2^\ell+j}+1=t_{2^\ell+k}+1=t_{2^\ell+k+1}.$$

If $j$ and $k$ are both odd,
then $(\ref{pre})$ implies for any even $i$
$$t_{i/2+(j-1)/2}=t_{i+j}+1=t_{i+k}+1=t_{i/2+(k-1)/2} \quad \mbox{for }i=0,2,\ldots,2^\ell$$
and by induction
$$t_{2^\ell+1+j}=t_{2^{\ell-1}+(j+1)/2}= t_{2^{\ell-1}+(k+1)/2}=t_{2^\ell+1+k},$$
which completes the proof.
\end{proof}

 Remark 1. It is easy to see that $\frac{N}{5}+1\le M({\cal T},N)\le 2\frac{N-1}{5}+1$ for $N\ge 4$ and $M({\cal T},1)=0$, $M({\cal T},2)=M({\cal T},3)=1$.

\section{Pattern sequences}
\label{rudin}

\begin{theorem}\label{patternthm}
  For $k\ge 2$ and $N\geq 2^{k+3}-7$,
  the $N$th maximum order complexity of the pattern sequence ${\cal P}_k$ satisfies
  $$M({\cal P}_k,N)=(2^{k-1}-1) 2^\ell+1$$
  where
  $$\ell= \left\lceil  \frac{\log (N/(2^k-1))}{\log 2}\right\rceil -1.$$
 \end{theorem}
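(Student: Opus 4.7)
The plan is to adapt the proof of Theorem~\ref{thuethm} to the pattern sequence, with a more elaborate case analysis to handle the richer $2^k$-periodic structure of the recurrence (\ref{pattdef}). Setting $L=(2^{k-1}-1)2^\ell$, the starting point is again Jansen's Proposition~3.1 in \cite{ja89}: $M({\cal P}_k, N)=t+1$, where $t$ is the length of the longest factor of $(p_0,\ldots,p_{N-1})$ occurring at least twice with distinct successors. By the monotonicity of $M$ in $N$, it is enough to prove
$$M({\cal P}_k,(2^k-1)2^\ell+1)\ge L+1\ge M({\cal P}_k,(2^k-1)2^{\ell+1})$$
for every relevant $\ell$.

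For the lower bound I would prove by induction on $\ell\ge 0$ the analogue of (\ref{ti}):
$$p_i=p_{i+2^{k+\ell-1}} \text{ for } i=0,1,\ldots,L-1, \text{ while } p_L\ne p_{L+2^{k+\ell-1}}.$$
The shift $s=2^{k+\ell-1}$ is chosen so that $L+s+1=(2^k-1)2^\ell+1$, matching the prescribed $N$. The base case $\ell=0$ is immediate: the binary expansion of $i$ contains no all-ones block of length $k$ for $i<2^k-1$ and contains exactly one for $i=2^k-1$, giving $p_0=\cdots=p_{2^k-2}=0$ and $p_{2^k-1}=1$. For $\ell\ge 1$ the shift $2^{k+\ell-1}$ is a multiple of $2^{k-1}$, so the correction $\epsilon_m$ in (\ref{pattdef}) (which equals $1$ exactly when $m\equiv -1\pmod{2^{k-1}}$) is invariant under the shift, and separating even and odd $i$ and applying (\ref{pattdef}) reduces the statement at level $\ell$ to the statement at level $\ell-1$.

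For the upper bound I would show by induction on $\ell$ that whenever $0\le a<b$ lie in the admissible range with $p_{i+a}=p_{i+b}$ for $i=0,1,\ldots,L$, we also have $p_{L+1+a}=p_{L+1+b}$. The argument has three steps. First, inspect short factors of ${\cal P}_k$ of length on the order of $2^k$: the positions of consecutive-equal and consecutive-distinct pairs, determined by (\ref{pattdef}), depend in a distinctive way on the starting position modulo $2^k$, and this forces $a\equiv b\pmod 2$, and with enough terms of the match, in fact $a\equiv b\pmod{2^k}$. Second, apply (\ref{pattdef}) to halve indices: using the congruence from the first step to ensure the $\epsilon_m$-values agree at paired positions, the match of length $L+1$ translates into a match of length $(2^{k-1}-1)2^{\ell-1}+1$ for ${\cal P}_k$ starting at $\lfloor a/2\rfloor,\lfloor b/2\rfloor$. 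Third, invoke the inductive hypothesis at level $\ell-1$ and lift the resulting successor coincidence back to level $\ell$ via (\ref{pattdef}).

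The main obstacle will be the first step of the upper bound. In Theorem~\ref{thuethm} a single length-$4$ window sufficed to pin down $j\equiv k\pmod 2$, but in ${\cal P}_k$ with $k\ge 2$ the sequence has consecutive equal values in runs of length up to $2^{k-1}$, so admissible factor shapes are substantially more varied and the case analysis branches over $2^k$ residue classes. The restriction $N\ge 2^{k+3}-7$ corresponds to $\ell\ge 3$, guaranteeing that every window under consideration is long enough to carry the shape information needed for the first step and to accommodate the boundary at the ends of the match.
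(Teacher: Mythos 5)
Your overall architecture is the same as the paper's: Jansen's criterion plus monotonicity, the lower bound via the shift $2^{k+\ell-1}$ proved by induction from the base $p_0=\cdots=p_{2^k-2}=0$, $p_{2^k-1}=1$, and the upper bound by first forcing $j\equiv n\bmod 2^k$ from a sufficiently long match and then halving indices and inducting on $\ell$. The genuine gap is in the upper bound: your induction on $\ell$ has no base case. Your third step invokes the statement at level $\ell-1$, so at $\ell=3$ you would need the corresponding claim at $\ell=2$ --- and that claim is false: for $k=2$ one has $M({\cal P}_2,24)=6>2^2+1$ (the paper's Remark~2), i.e.\ a factor of length $(2^{k-1}-1)2^2+1$ does occur twice with different successors inside the first $(2^k-1)2^3$ terms. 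Hence the case $\ell=3$ cannot be produced by the inductive machinery at all; in the paper it is settled by a separate explicit verification using the first $3\cdot 2^{k+2}-7$ terms of ${\cal P}_k$ (the displayed prefixes of ${\cal P}_2$, ${\cal P}_3$ and of ${\cal P}_k$ for $k\ge 4$, together with the quasi-periodicity $p_{i+2^{\ell+k}}=p_i$ from (\ref{patteq1}), which reduces the check to finitely many odd pairs $j<n\le 2^{k+3}-1$ with $j\equiv n\bmod 2^k$). Your closing remark attributes the restriction $N\ge 2^{k+3}-7$ merely to the windows being ``long enough'' for the mod-$2^k$ step; that is not the whole story, and without the explicit base-case check your induction does not get off the ground.

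Two smaller points. In the lower-bound induction you justify invariance of the correction term by noting that the shift $2^{k+\ell-1}$ is a multiple of $2^{k-1}$; what is actually needed (and true for $\ell\ge 1$) is that the shift is a multiple of $2^{k}$, equivalently that the half-shift $2^{k+\ell-2}$ is a multiple of $2^{k-1}$, so that the condition $i\equiv -1\bmod 2^k$ in (\ref{pattdef}) is preserved. And your first step of the upper bound (``shape information forces $a\equiv b\bmod 2$ and then $\bmod\ 2^k$'') is only a sketch; it can be carried out, as in the paper, by locating within the match two offsets $m_1,m_2$ with $n+m_1\equiv 2^k-1\bmod 2^{k+1}$ and $n+m_2\equiv -1\bmod 2^{k+1}$ and comparing where consecutive terms differ, but as written this part is a plan rather than a proof.
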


 \begin{proof}

By the monotony of the maximum order complexity it is enough to show
 $$M({\cal P}_k,(2^{k}-1)2^{\ell}+1)\ge (2^{k-1}-1)2^\ell +1\ge M({\cal P}_k,(2^{k}-1) 2^{\ell+1})\quad \mbox{for }\ell\ge 3.$$
From Proposition 3.1 in \cite{ja89}, the first inequality follows from
\begin{eqnarray}\label{patteq1}
p_i&=&p_{i+2^{\ell+k-1}} \quad \mbox{for}\quad i=0,1,\ldots,(2^{k-1}-1)2^\ell -1  \\
 &\mbox{and} &  p_{(2^{k-1}-1) 2^\ell}\ne p_{(2^{k}-1)2^\ell} \nonumber
\end{eqnarray}
for $\ell\ge  0$,
 which we show by induction over $\ell$.
For $\ell= 0$ the assertion is obviously true  since $p_i=0$ for $i=0,1, \ldots,2^k-2$ and $p_{2^k-1}=1$ by $(\ref{pattdef})$. We may assume $\ell\ge 1$.\\
  For even $i$ we get from $(\ref{pattdef})$ and induction
 \begin{equation}\label{patterneven}
 p_i=p_{i/2}=p_{i/2+2^{\ell+k -2}}=p_{i+2^{\ell+k -1}},\quad i=0,2,\ldots,(2^{k-1}-1)2^\ell -2.
 \end{equation}
For odd $i$ we get from $(\ref{pattdef})$
 \begin{equation}\label{pattodd}
      p_i=\left\{ \begin{array}{cl} p_{i-1} & \mbox{if $i\not\equiv -1 \bmod  2^k$ },\\
      p_{i-1}+1 & \mbox{if $i\equiv  -1 \bmod  2^k$},
 \end{array}\right.\quad i=1,3,\ldots
\end{equation}
 Now fix any odd $i=1,3,\ldots,(2^{k-1}-1)2^\ell -1$.
If $i\not\equiv -1 \bmod 2^k$, then  we get from (\ref{patterneven}) and (\ref{pattodd})
$$p_i=p_{i-1}=p_{i-1+2^{\ell+k-1}}=p_{i+2^{\ell+k-1}}.$$
If $i\equiv -1\bmod 2^k$, then
$$p_i=p_{i-1}+1=p_{i-1+2^{\ell+k-1}}+1=p_{i+2^{\ell+k-1}}.$$
 Moreover,
 $$p_{(2^{k-1}-1)2^\ell}=p_{(2^{k-1}-1)2^{\ell-1}} \ne p_{(2^{k}-1) 2^{\ell-1}}= p_{(2^{k}-1)2^\ell}$$
  by induction.

Now we prove $M({\cal P}_k,(2^{k}-1)2^{\ell+1})\leq (2^{k-1}-1)2^\ell+1 \,\, \mbox{for }\ell\ge 3.$

That is, we have to show for any $\ell\ge 3$ that,
if for some $0\le j<n\le
(3\cdot 2^{k-1}-1)2^\ell-2$
we have
\begin{equation}\label{sec3}
p_{i+j}=p_{i+n} \quad \mbox{for }i=0,1,\ldots,(2^{k-1}-1)2^\ell,
\end{equation}
then we also have
\begin{equation}\label{tobeshown} p_{(2^{k-1}-1)2^\ell+1+j}= p_{(2^{k-1}-1)2^\ell+1+n}.
\end{equation}

First we observe that $(\ref{sec3})$ implies $j\equiv n\bmod 2^k$:

We choose any $m_1,m_2$ with $n+m_1\equiv 2^k-1\bmod 2^{k+1}$ and $n+m_2\equiv -1 \bmod 2^{k+1}$ and see that
$$n+m_1\equiv n+m_2\equiv -1 \bmod 2^k,\quad (n+m_1-1)/2\equiv  2^{k-1}-1\bmod 2^k$$
and
$$(n+m_2-1)/2\equiv -1\bmod 2^k.$$

If $j\equiv n\bmod 2$, then $j+m_1\equiv 1\bmod 2$. Moreover, we assume $1\le m_1\le 2^{k+1}$ in this case. Now $(\ref{sec3})$ with $i\in \{m_1,m_1-1\}$ and $(\ref{pattodd})$ imply
$p_{j+m_1}=p_{n+m_1}=p_{n+m_1-1}+1=p_{j+m_1-1}+1$ and from $(\ref{pattodd})$  again we get $j+m_1\equiv -1\bmod 2^k$ and thus
$j\equiv n\bmod 2^k$ in this case.

If $j\not\equiv n\bmod 2$, we assume $2\le m_1,m_2\le 2^{k+1}+1$. Then from $(\ref{sec3})$ with $i\in \{m_1-1,m_1-2\}$, $(\ref{pattdef})$ and $(\ref{pattodd})$ we get
$p_{j+m_1-1}=p_{n+m_1-1}=p_{(n+m_1-1)/2}=p_{(n+m_1-3)/2}=p_{n+m_1-3}=p_{n+m_1-2}=p_{j+m_1-2}$ implies $j+m_1-1\not\equiv -1 \bmod 2^k$.
However,
 $p_{j+m_2-1}=p_{n+m_2-1}=p_{n+m_2-2}+1=p_{j+m_2-2}+1$ and $(\ref{pattodd})$ imply $j+m_2-1\equiv -1 \bmod 2^k$
in contradiction to $m_1\equiv m_2\bmod 2^k$.

It remains to show that  $(\ref{sec3})$ implies $(\ref{tobeshown})$ for any $j\equiv n\bmod 2^k$.

For $j\equiv n\equiv 0\bmod 2$, $(\ref{pattodd})$ and $(\ref{sec3})$ with $i=(2^{k-1}-1)2^\ell$ immediately imply $(\ref{tobeshown})$.
For $j\equiv n\equiv 1\bmod 2$ we prove the assertion by induction.

Note that from (\ref{patteq1}) we get the last $(2^{k-1}-1)2^{\ell+1}$ elements from the first ones:
\begin{eqnarray*}
p_{i+2^{\ell+k}}=p_i \quad \mbox{for } i=0,1,\ldots, (2^{k-1}-1)2^{\ell+1} -1.
\end{eqnarray*}
 Then for verifying our assertion for $\ell=3$ we need only the first $3\cdot2^{k+2}-7$
  elements of $P_k$.
We use the abbreviation $a^t=\underbrace{aa\ldots a}_{t}$ for the word of $t$ consecutive~$a$ and get
  using $(\ref{pattdef})$:
  \begin{equation*}
 {\cal P}_2= \big{(} 0^{3}10^{2}10^{4}1^3010^{3}10^{2}101^30^310^410^2100\ldots\big{)}
 \end{equation*}
 \begin{equation*}
{\cal P}_3= \big{(}0^{7}10^{6}10^{8}10^{4}1^2010^{7}1
 0^{6}10^{8}1^50^21
 0^{8}10^{6}10^{8}10\ldots\big{)}
\end{equation*}
  and for $k \geq 4$
 \begin{eqnarray*}
 {\cal P}_{k}&=& \big{(} 0^{2^k-1}10^{2^k-2}10^{2^k}10^{2^k-4}1^2010^{2^k-1}1
 0^{2^k-2}10^{2^k}10^{2^k-8}1^40^21 \\
 && \quad 0^{2^k}10^{2^k-2}10^{2^k}10^{2^k-7}\ldots\big{)}.
 \end{eqnarray*}
  Note that we have to compare only the patterns of length $(2^{k-1}-1)2^\ell+2$ starting with $p_j$ and $p_n$ with $j\equiv n\bmod 2^k$, $j\equiv n\equiv 1\bmod 2$ and $0\leq j<n\leq 2^{k+3}-1$.

Now we consider $\ell \ge 4$.
For even $i$ with $0\leq i\leq (2^{k-1}-1)2^\ell$ we get
from $(\ref{pattdef})$ and $(\ref{sec3})$
$$p_{i/2+(j-1)/2}=p_{i/2+(n-1)/2}.$$
From the observations above we know that this is only possible if $(j-1)/2\equiv (n-1)/2\bmod 2^k$.
Either by induction if $(j-1)/2\equiv (n-1)/2\equiv 1\bmod 2$ or using the already above verified result if $(j-1)/2\equiv (n-1)/2\equiv 0\bmod 2$, we get
\begin{eqnarray*}
p_{(2^{k-1}-1)2^\ell+1+j}
&=&p_{(2^{k-1}-1)2^{\ell-1}+(j+1)/2}=p_{(2^{k-1}-1)2^{\ell-1}+(n+1)/2}\\
&=&p_{(2^{k-1}-1)2^{\ell}+1+n},
\end{eqnarray*}
which completes the proof.
 \end{proof}

 Remark 2. The restriction on $N$ in Theorem~\ref{patternthm} is needed. For example, for the Rudin-Shapiro sequence we have
$$M({\cal P}_2,N)=\left\{\begin{array}{ll} 0, & 1\le N\le 3,\\ 3, & 4\le N\le 9,\\6, &10\le N\le 24.\end{array}\right.$$

Remark 3. For $k\ge 2$ and $N\ge 2^{k+3}-7$ Theorem~\ref{patternthm} implies
$$\frac{N}{6}+1\leq \frac{2^{k-1}-1}{2^k-1}\frac{N}{2}+1\le M({\cal P}_k,N)\le \frac{2^{k-1}-1}{2^k-1}(N-1)+1< \frac{N+1}{2}.$$

 \section{Final remarks}

 The subsequence of the Thue-Morse sequence along $(t_{i^2})_{i=0}^\infty$ is not automatic. Hence, its expansion complexity is unbounded. It is shown by the authors in \cite{suwi}
 that its $N$th maximum order complexity is at least of order of magnitude $N^{1/2}$ and this sequence may be an attractive candidate for cryptographic applications. Pattern sequences along squares are also
 analyzed in \cite{suwi}.

 The correlation measure of order $k$ introduced by Mauduit and
 S\'ark\"ozy \cite{masa97}
  is another figure of merit which is finer than the linear complexity,
 see \cite{brwi06}.
  A cryptographic sequence must have small correlation measure of all orders~$k$ up to a sufficiently large $k$.  In \cite{iswi17}
   the maximum order complexity of a binary sequence was estimated
  in terms of its correlation measures. Roughly speaking, it was shown that
  any sequence with small correlation measure up to a sufficiently large
  order $k$ cannot have very small maximum order complexity. Moreover, the
 correlation measure of order $2$ of both Thue-Morse and Rudin-Shapiro
 sequence of length $N$ is of order of magnitude $N$, see \cite{masa98}.
  The same is true for any pattern sequence, see \cite{mewi17}.
  Hence, together with the results of this paper we see that the correlation
 measure of order $k$ is a finer quality measure for cryptographic sequences
 than the maximum order complexity.

 Combining a bound of \cite{br17}
  on the state complexity in terms of the expansion complexity and a bound
 of \cite{mewi17}
 on the state complexity in terms of the correlation measure of order $2$, we
 can also estimate the expansion complexity in terms of the correlation
 measure of order $2$.

 Furthermore, the maximum order complexity and its connections with
 Lempel-Ziv complexity was studied in \cite{likoka07}.

  In \cite{suzelihe17}
  the (periodic) sequences of largest possible maximum order complexity were
 classified. However, these sequences are highly predictable and not
 suitable in cryptography.  In  \cite{luxiyo17}
 and \cite{nixi14}
  several sequence constructions are given which have very large maximum order
 complexity but no obvious flaw.

 Finally, we mention that although the linear complexity is a weaker quality
 measure for cryptographic sequences than maximum order complexity as well
 as correlation measure and expansion complexity, it is
 still of high practical importance since it is much easier to calculate than
 all of the finer measures.

\section*{Acknowledgments}
The first author is  supported by China Scholarship Council and the National Natural Science Foundation of China Grant 61472120 .
The second author is partially supported by the Austrian Science Fund FWF Project P~30405-N32.

\end{document}